\documentclass[11pt,amssymb]{amsart}
\usepackage{amssymb,hyperref}
\usepackage[mathscr]{eucal}
\usepackage[all,cmtip]{xy}
\usepackage{tikz-cd}

\newcommand{\Z}{{\mathbb Z}}

\newcommand{\Gal}{\mathrm{Gal}}

\newtheorem{thm}{Theorem}[section]
\newtheorem{lemma}[thm]{Lemma}
\newtheorem{prop}[thm]{Proposition}
\newtheorem{cor}[thm]{Corollary}
\theoremstyle{definition}
\newtheorem{remark}[thm]{Remark}

\newcommand{\Hom}{\mathrm{Hom}}

.240pk scaled 1200 .240pk
\DeclareFontFamily{U}{wncy}{}
    \DeclareFontShape{U}{wncy}{m}{n}{<->wncyr10}{}
    \DeclareSymbolFont{mcy}{U}{wncy}{m}{n}
    \DeclareMathSymbol{\Sha}{\mathord}{mcy}{"58}
\begin{document}

\title[The finiteness of the Tate-Shafarevich group]{\bf Finiteness of the Tate-Shafarevich group over function fields for groups of multiplicative type}

\begin{abstract}
Let $K = k(X)$ be the function field of a smooth geometrically integral variety $X$ of dimension $\geq 2$ over a field $k$ of characteristic 0 and $V$ be the set of discrete valuations of $K$ associated with the prime divisors on $X$. We show that if $D$ is a $k$-defined group of multiplicative type, then the corresponding Tate-Shafarevich group $\Sha(D,V) = \ker \left(H^1(K,D) \to \prod_{v \in V} H^1(K_v, D) \right)$ is finite in the following situations: (1) $k$ is finitely generated and $X(k) \neq \emptyset$; (2) $k$ is a number field. This complements previous work of Harari and Szamuely \cite{HS}, which considered the case where $X$ is a curve.
\end{abstract}

\author[I.A.~Rapinchuk]{Igor A. Rapinchuk}

\author[A.~Roy]{Avinash Roy}

\address{Department of Mathematics, Michigan State University, East Lansing, MI 48824, USA}

\email{rapinchu@msu.edu}

\address{Department of Mathematics, Michigan State University, East Lansing, MI 48824, USA}

\email{royavina@msu.edu}

\maketitle

\section{Introduction}\label{S:intro}
Suppose $K$ is a field equipped with a set $V$ of rank one valuations and $G$ is an algebraic group defined over $K$. One defines the corresponding \emph{Tate-Shafarevich set} $\Sha(G,V)$ as the kernel of the global-to-local map in Galois cohomology
$$
\Sha(G,V) = \ker \left( H^1(K,G) \stackrel{\theta_{G,V}}{\longrightarrow} \prod_{v \in V} H^1(K_v, G) \right),
$$
where $K_v$ denotes the completion of $K$ with respect to $v$.

A major theme in number theory and arithmetic geometry has been the study of the finiteness properties of $\Sha(G,V)$ for various kinds of algebraic groups. In the context of linear algebraic groups, one of the central results is that if
$K$ is a number field, $V$ is the set of all places of $K$, and $G$ is either a simply-connected or adjoint semisimple $K$-group, then $\theta_{G,V}$ is injective, and hence $\Sha(G,V)$ is trivial (see \cite[Chapter 6]{Pl-R}). Furthermore, although $\theta_{G,V}$ may fail to be injective for an arbitrary $K$-group $G$, it turns that $\theta_{G,V}$ is nevertheless always \emph{proper} (i.e., the preimage of a finite set is finite), so, in particular, $\Sha(G,V)$ is \emph{finite} (see \cite[Ch. III, \S\S4.6-4.7]{Serre-GC}).

For a long time, it had been thought that this finiteness result is specific to number fields, where its proof in the general case relies on reduction theory for adelic groups. However, in recent years, there has been growing evidence that a similar finiteness statement may hold in a far more general setting (we refer the reader to \cite[\S6]{RR-Survey} for an overview of some of these developments).

A natural (higher-dimensional) situation to consider is when $K = k(X)$ is the function field of a smooth geometrically integral variety $X$ over a field $k$ and $V$ is the set discrete valuations of $K$ corresponding to the prime divisors of $X$ (``geometric places"). One can then ask about the finiteness properties of $\Sha(G,V)$ for an algebraic $K$-group $G$. Although, at present, this problem is largely open, we would like to mention two available results. First, if $k$ is a field of characteristic 0 satisfying Serre's condition (F) (see \cite[Ch. III, \S4.1]{Serre-GC}), then it was shown  in \cite{RR-Tori} and \cite{RR-Tori1} that $\Sha(D,V)$ is finite when $D$ is an algebraic $K$-group of multiplicative type. The argument was based on adelic considerations and, in fact, applies, more generally, to all (possibly non-commutative) $K$-groups $D$ whose connected component $D^{\circ}$ is a torus.

Second, suppose $k$ is a number field (hence, in particular, does not satisfy condition (F)). Then Harari and Szamuely \cite{HS} showed
that a similar finiteness statement for $\Sha(D,V)$ can be established when $K = k(C)$ is the function field of a curve $C$ over $k$ provided
one only considers groups that are \emph{defined over $k$}. More precisely, they proved that if $D$ is a group of multiplicative type defined over a field $k$ of characteristic 0 and $K= k(C)$ is the function field of a smooth geometrically integral $k$-defined curve $C$, then $\Sha(D,V)$ is finite when
\begin{enumerate}
\item $k$ is a finitely generated field and $C(k) \neq \emptyset$; or
\item $k$ is a number field.
\end{enumerate}
A bit later, it was observed in \cite{RR-Tori2} that the adelic approach of \cite{RR-Tori} and \cite{RR-Tori1} can be adapted to prove the finiteness of $\Sha(T,V)$ when $X$ is an arbitrary smooth geometrically integral variety and $T$ is a $k$-defined algebraic torus (in particular, this provides an alternative proof of the main result of \cite{HS} in the case of tori). The purpose of this note is to extend the latter finiteness statement to all groups of multiplicative type defined over $k$. Since the case of curves was considered in \cite{HS}, to avoid repetition, we will only consider varieties of dimension $\geq 2$ (although our arguments, with some simplifications, also work for curves). Thus, our
main result is the following.

\begin{thm}\label{T:mainthm}
Suppose $X$ is a smooth geometrically integral variety  of dimension $\geq 2$ over a field $k$ of characteristic 0. Let $V$ be the set of discrete valuations of the function field $K = k(X)$ corresponding to the codimension one points of $X$. Then for any $k$-defined group $D$ of multiplicative type,
the Tate-Shafarevich group $\Sha(D, V)$ is finite in the following situations:
    \begin{enumerate}
    \item $k$ is a finitely generated field and $X(k) \neq \emptyset$; or
\item $k$ is a number field.
    \end{enumerate}

\end{thm}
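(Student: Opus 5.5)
The plan is to reduce from an arbitrary group of multiplicative type to two basic cases: tori, which are handled in \cite{RR-Tori2}, and finite groups of multiplicative type, where the problem becomes one about unramified classes. Since $D$ is defined over $k$, I take the connected component $T = D^{\circ}$; this is a $k$-torus, and $F = D/T$ is a finite $k$-group of multiplicative type. The exact sequence $1 \to T \to D \to F \to 1$ is defined over $k$ and gives a commutative diagram of exact sequences
$$
H^1(K,T) \to H^1(K,D) \to H^1(K,F)
$$
together with their local counterparts over each $K_v$. A standard diagram chase gives an exact sequence of groups
$$
\Sha(T,V) \to \Sha(D,V) \to \Sha'(F,V),
$$
where $\Sha'(F,V)$ consists of classes in $H^1(K,F)$ whose local images lie in the image of $H^1(K_v,D)$. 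The problem is that $\Sha'(F,V)$ is not itself a Tate-Shafarevich group.

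To get around this I would use an embedding instead. Every $k$-group of multiplicative type embeds into a quasi-split $k$-torus $S = R_{L/k}(\mathbb{G}_m)^r$, obtained by surjecting a permutation module onto the character module $X^*(D)$. Put $T' = S/D$, which is again a $k$-torus. Hilbert 90 together with Shapiro's lemma gives $H^1(K,S)=H^1(K_v,S)=0$, so
$$
H^1(K,D) \cong T'(K)/\pi(S(K)),
$$
and similarly over each $K_v$. Hence $\Sha(D,V)$ is the group of $x \in T'(K)$ that lie in $\pi(S(K_v))$ for all $v \in V$, taken modulo $\pi(S(K))$. Alternatively, I could work in degree two: from $1\to D\to S\to T'\to 1$ we get $H^1(K,T') \to H^2(K,D) \to H^2(K,S)$. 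Of the two, the $H^1$ description looks more tractable, and I will follow it.

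I would then use the geometric places. A class $\xi \in \Sha(D,V)$ is locally trivial, so it is in particular unramified at every $v$. By purity for $H^1$ with coefficients in a finite or multiplicative-type group, $\xi$ should then come from $H^1_{et}(X, D)$. For the finite part this is Zariski–Nagata purity. For the torus part, I would argue through the sequence $1\to D\to S\to T'\to1$ over $X$, using that $X$ is smooth, so that $\mathrm{Pic}$ and units control the relevant cohomology of $S$ and $T'$. Thus $\Sha(D,V)$ sits inside $H^1_{et}(X,D)$, modulo some contribution of $\mathrm{Pic}(X_L)$.

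Next I use the hypotheses on $k$. In case (1), a point $x_0\in X(k)$ gives a section of $X\to \mathrm{Spec}\,k$. Together with the Hochschild–Serre spectral sequence, this splits $H^1_{et}(X,D)$ as $H^1(k,D)\oplus H^1_{et}(X,D)_{x_0}$. The second summand is controlled by $H^1(\bar X, D)^{\Gal}$, which is finite because $\pi_1^{ab}(\bar X)$ is topologically finitely generated, and by the Néron–Severi and Picard groups over $\bar k$, which are finitely generated. For the constant part $H^1(k,D)$, I need that a constant class which is trivial in every $K_v$ is trivial. Restricting to the residue field of a divisor through $x_0$ and specializing, such a class becomes trivial in $H^1(k(x),D)$ for many points, and I would then apply the finiteness results of \cite{HS} for curves: I would take a curve $C \subset X$ through $x_0$, cut out by hyperplane sections, and compare $V$ with the places of $k(C)$.

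In case (2) there may be no rational point. There I would pass to a finite extension $k'/k$ with $X(k')\neq\emptyset$ and use restriction–corestriction, noting that the kernel of $H^1(K,D)\to H^1(Kk',D)$ is killed by $[k':k]$ and lies in a finite group.

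I expect the main obstacle to be the constant part: showing that locally trivial classes coming from $H^1(k,D)$ form a finite group in dimension $\geq 2$. This requires reducing to a curve via a Bertini-type argument, while keeping control of the places. I would first try to pick a smooth curve $C$ through $x_0$ such that the places of $k(C)$ come from restriction of divisors on $X$, and then invoke \cite{HS} for $k(C)$.
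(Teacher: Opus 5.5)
\textbf{There is a genuine gap in case (2).} Restriction--corestriction only shows that $\ker\bigl(H^1(K,D)\to H^1(Kk',D)\bigr)=H^1(Kk'/K,D)$ is killed by $[k':k]$. This group is not finite once $D$ has a nontrivial torus part. For example, if $k'/k$ is quadratic and $D=R^1_{k'/k}(\mathbb{G}_m)$, then $H^1(Kk',D)=0$, so the kernel is all of $H^1(K,D)\cong K^\times/N_{Kk'/K}((Kk')^\times)$, which is infinite. In that case passing to $X_{k'}$ tells you nothing about $\Sha(D,V)$.

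What you must prove is that the relative group $\Sha(D,V)\cap H^1(Kk'/K,D)$ is finite. This is where the arithmetic of $k$ has to enter, and your proposal has no argument for it. In the paper this is Proposition~\ref{P:Finiteness-of-Rel-TS}, for $L=\ell K$ with $\ell$ splitting $T=D^\circ$:
\begin{itemize}
\item Off finitely many places one has $D(\mathbb{A}(L,(V')^L))=D(\mathbb{A}^{\infty}(L,(V')^L))\,D(L)$, so relative classes come from $H^1(L/K,E(D,V',L))$.
\item Since $E(D,V',L)/D(\ell)$ is finitely generated, $H^1(L/K,\Gamma)$ is finite. Up to a finite group, this reduces you to constant classes $\Sha_0(D,V)\subset H^1(\ell/k,D)$ that are trivial at every geometric place.
\item Without a rational point such classes need not vanish. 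The paper takes a smooth geometrically integral divisor $Y$ and uses Lang--Weil and Hensel to get $Y(k_u)\neq\emptyset$ for almost all $u$.
\item Lemma~\ref{L:injective-map-rational-point}, applied over $k_u$, then gives $\Sha_0(D,V)\subset\Sha_{\ell/k}(D,U)$, which is finite by Borel--Serre.
\end{itemize}
Passing to a finite extension is harmless only for the finite quotient $\Omega=D/T$, where $H^1$ of a finite group with finite coefficients is finite; this is exactly how the paper uses it in Proposition~\ref{P:propfin}. Also, over $L$ one has $H^1(L,T)=0$, so $\Sha(L,D)\hookrightarrow\Sha(L,\Omega)$. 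This removes your $\Sha'(F,V)$ difficulty without any embedding into a quasi-split torus.

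\textbf{Case (1) is a genuinely different, \'etale-cohomological route, but as written it is not yet a proof.}
\begin{itemize}
\item Picard groups over $\bar k$ are not finitely generated: $\mathrm{Pic}(\bar X)$ contains the divisible group $\mathrm{Pic}^0$ of a compactification. You would need to pass to Galois invariants and invoke Lang--N\'eron.
\item The purity statement for the torus part in dimension $\geq 2$ is only asserted, not proved.
\item Most seriously, the curve detour for the constant part fails. The places of $k(C)$ are closed points of $C$, which have codimension $\dim X\geq 2$ in $X$. Local triviality at the places in $V$ therefore gives no local information at them, and \cite{HS} cannot be applied.
\end{itemize}
The right argument is the one you gestured at and then dropped. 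Take a smooth geometrically integral divisor $Y\ni x_0$ (Proposition~\ref{P:Bertini-type}). What makes the two specializations preserve triviality is the Colliot-Th\'el\`ene--Sansuc injectivity $H^1(R,D)\hookrightarrow H^1(\mathrm{Frac}\,R,D)$ for regular local rings $R$, applied to $\mathcal{O}_{L_w}$ and to $\mathcal{O}_{Y_\ell,x_0}$. Reducing modulo the maximal ideals then shows that $H^1(\ell/k,D)\to H^1(L_w/K_v,D)$ is injective (Lemma~\ref{L:injective-map-rational-point}). So the single place $v=v_Y$ already kills every constant class, and no curve is needed.
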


\begin{remark}
For our purposes, we may (and will) assume that $X$ is an affine variety. Indeed, this
can always be achieved by replacing $X$ with a suitable affine open $k$-subvariety, which will only shrink the corresponding set $V$ of geometric places of $K = k(X)$.
\end{remark}

The paper is organized as follows. In \S\ref{S:Finite-group}, we establish the properness of the global-to-local map for finite commutative $k$-defined groups. Next, in \S\ref{S:adeles}, we briefly review some key points concerning adelic groups. These considerations are then applied in \S\ref{S:Finiteness-Rel-TS} to obtain a finiteness result for a certain relative Tate-Shafarevich group. Finally, in \S\ref{S:proof-of-main-theorem}, we use the results of \S\ref{S:Finite-group} and \S\ref{S:Finiteness-Rel-TS} to prove Theorem \ref{T:mainthm}.

\vskip3mm

\noindent {\bf Notations and conventions.} Suppose $G$ is an algebraic group over a field $K$. We will use the usual notations (see \cite{Serre-GC}) for the Galois cohomology of $G$. Namely, if $L/K$ is a (finite or infinite) Galois extension, we will denote by $H^1(L/K, G)$ the (pointed) Galois cohomology set $H^1(\Gal(L/K), G(L))$; if $L = K^{\rm sep}$ is a separable closure of $K$, we will denote this set simply by $H^1(K, G)$. For additional clarity, we will sometimes write $H^1(L/K, G(L)).$

\section{The case of a finite commutative group}\label{S:Finite-group}

The goal of this section is to prove the following statement concerning the properness of the global-to-local map for finite commutative groups.

\begin{prop}\label{P:propfin}
Let $X$ be a smooth geometrically integral affine variety of dimension $\geq 2$ over a field $k$ of characteristic 0, and denote by $V$ the set of
geometric places of
the function field $K = k(X)$ corresponding to the codimension one points of $X.$ Suppose $\Omega$ is a finite commutative algebraic group defined over $k$. Then the global-to-local map
$$
H^1(K, \Omega) \stackrel{\theta_{\Omega, V}}{\longrightarrow} \prod_{v \in V} H^1(K_v, \Omega)
$$
is proper.

\end{prop}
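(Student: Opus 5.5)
We will use purity for étale cohomology and reduce properness to finiteness of an unramified cohomology group.

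Since $\Omega$ is finite, there is a finite Galois extension $k'/k$ over which $\Omega$ becomes constant and $\mu_n\subset k'$, where $n=|\Omega|$. Replacing $K$ by $K'=k'K$ changes things only by finite amounts. Indeed, the inflation–restriction sequence
$$0\to H^1(\Gal(K'/K),\Omega(K'))\to H^1(K,\Omega)\to H^1(K',\Omega)$$
has finite first term, so the restriction map has finite fibers. On the local side, for $v\in V$ and $w\mid v$ a geometric place of $K'=k'(X_{k'})$, we have $H^1(K_v,\Omega)\to H^1(K'_w,\Omega)$. The places $w$ are exactly the codimension-one points of $X_{k'}$ lying over $v$.

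Properness means the following: for each family $(y_v)$ in the image, the set of $x\in H^1(K,\Omega)$ with $\theta(x)=(y_v)$ is finite. Fix such an $x_0$. Any other preimage $x$ differs from $x_0$ by an element of the kernel $\Sha(\Omega,V)$, since $\Omega$ is commutative and all maps are homomorphisms. So properness reduces to finiteness of $\Sha(\Omega,V)$. By the reduction above and the finiteness of the inflation term, it suffices to prove that $\Sha(\Omega,V')$ is finite for $K'$, after base change.

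Over $k'$ we have $\Omega\cong\prod \Z/n_i\cong\prod\mu_{n_i}$ as Galois modules over $K'$. So we may assume $\Omega=\mu_n$ and $k=k'$. Then $H^1(K,\mu_n)=K^\times/K^{\times n}$. A class $a$ lies in $\Sha$ only if it is trivial in $H^1(K_v,\mu_n)$ for all $v$; in particular $v(a)\equiv 0 \pmod n$ for every prime divisor. This means $a$ is unramified along all codimension-one points, i.e. $a\in H^1_{\mathrm{et}}(X,\mu_n)$ by purity (Zariski–Nagata, using smoothness of $X$). Now use the Kummer sequence
$$0\to \mathcal{O}(X)^\times/n\to H^1(X,\mu_n)\to \mathrm{Pic}(X)[n]\to 0.$$
Finiteness of $\mathrm{Pic}(X)[n]$ and of $\mathcal{O}(X)^\times/n$ is false in general over huge $k$, so we must exploit the local conditions further. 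Fix a rational point or closed point. Local triviality at \emph{all} prime divisors also gives constraints of the following kind: a class in $H^1(X,\mu_n)$ defines a finite étale $\mu_n$-torsor $Y\to X$, and triviality at $v$ means the divisor $v$ splits completely in $Y$.

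The key step is then a Chebotarev/Bertini-type argument. In dimension $\ge 2$, every closed point $x\in X$ lies on many prime divisors $Z$; triviality of the torsor at the generic point of $Z$ forces the torsor to be trivial over the henselization along $Z$. Specializing along a divisor through $x$ shows the fiber of $Y$ over $x$ splits, i.e. the class is trivial at every closed point. A torsor whose restriction to every closed point is trivial is geometrically trivial: its class lies in the image of $H^1(k,\mu_n)=k^\times/n$ and dies at all points. Since it is constant and trivial at a closed point of degree coprime to it, it becomes $0$ up to a finite subgroup. This last finiteness is the main obstacle. I would handle it by passing to a further finite extension and noting that a constant class $c\in H^1(k,\mu_n)$ trivial in $H^1(k(x),\mu_n)$ for all closed points $x$ lies in the finite kernel of $H^1(k,\mu_n)\to H^1(k(x_0),\mu_n)$ for a single closed point $x_0$. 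This shows $\Sha(\mu_n,V)$ is finite, which gives properness.
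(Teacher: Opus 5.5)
There is a genuine gap at your ``key step''. You use the local conditions only to conclude that a class $a\in\Sha(\mu_n,V)$ is trivial at every closed point of $X$. You then assert that a torsor trivial at every closed point is geometrically trivial. That is false over a general field of characteristic $0$, which is the generality of the proposition. Take $k=\mathbb{C}$: every class is trivial at every closed point, yet for $X=\mathbb{G}_m\times\mathbb{A}^1$ with coordinates $(t,s)$, the Kummer class of $t$ is nonzero in $H^1(X,\mu_n)=H^1(\overline{X},\mu_n)$. That class is not in $\Sha$, since it is nontrivial along $\{s=0\}$; this shows that passing to closed points discards exactly the information you need. Even for finitely generated $k$, the implication would require a Hilbert-irreducibility argument that you do not give; Chebotarev is not available over $k$ itself. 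As written, nothing in your proof bounds the part of $\Sha(\mu_n,V)$ that is nonzero on $\overline{X}$. You came close when you noted that $\mathcal{O}(X)^\times/n$ and $\mathrm{Pic}(X)[n]$ can be infinite: the group that is finite is the geometric one.

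The missing idea is the one the paper relies on. Since $\mathrm{char}\,k=0$, $\pi_1^{\mathrm{\acute{e}t}}(\overline{X})$ is topologically finitely generated, so $H^1(\overline{X},\Omega)=\Hom_{\rm cont}(\pi_1^{\mathrm{\acute{e}t}}(\overline{X}),\Omega)$ is finite. By the homotopy exact sequence (or Hochschild--Serre), the kernel of $H^1(X,\Omega)\to H^1(\overline{X},\Omega)$ is the image of $H^1(k,\Omega)$. So you only need the constant classes in $\Sha$ to form a finite set; geometric triviality is never needed.

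Your final step handles the constant classes correctly. Specialize along a prime divisor $Z$ through a closed point $x_0$, chosen regular at $x_0$, e.g.\ the component through $x_0$ of $V(f)$ with $f\in\mathfrak{m}_{x_0}\setminus\mathfrak{m}_{x_0}^2$. You should state this choice, since normality at $x_0$ is what lets triviality at the generic point of $Z$ pass to $x_0$. Then a constant class in $\Sha$ lies in the finite kernel of $H^1(k,\mu_n)\to H^1(k(x_0),\mu_n)$. This is a mild variant of the paper's argument. The paper instead passes to an extension over which $X$ has a rational point and specializes along a smooth geometrically integral Bertini divisor through it, showing the constant part of $\Sha$ is trivial.

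With the finiteness of $H^1(\overline{X},\Omega)$ substituted for your false claim, your argument goes through. The reduction to $\mu_n$ and the Kummer sequence then become unnecessary.
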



\begin{proof}
It suffices to show that $\ker \theta_{\Omega, V} = \Sha(\Omega, V)$ is finite. First, we observe that, using the inflation-restriction sequence, we may replace $K$ by a finite extension $L/K$ (see the proof of \cite[Proposition 2.1]{RR-Tori1} for the details). So, let $\ell/k$ be a finite Galois extension such that $X(\ell) \neq \emptyset$ and $\Omega$ is a trivial Galois module over $\ell$ (more precisely, $\Omega$ is isomorphic over $\ell$ to a product of cyclic groups $\Z/m \Z$ with trivial Galois action). Then, setting $L = \ell \cdot K$, we note that $L/K$ is a finite Galois extension. Let $V^L$ be the set of all extensions of the places in $V$ to $L$ (in geometric terms, the elements of $V^L$ correspond to the prime divisors of the normalization of $X$ in $L$). Then it suffices to show the finiteness of the kernel of the map
$$
H^1(L, \Omega) \to \prod_{w \in V^L} H^1(L_w, \Omega).
$$
Thus, replacing $k$ by $\ell$, we may assume that $X(k) \neq \emptyset$ and $\Omega$ is a trivial Galois module over $K$ (and over $k$). In particular, we have
$$
H^1(K, \Omega) = {\rm Hom}_{\rm cont}(\Gal(K^{\rm sep}/K), \Omega) \ \ \ \text{and} \ \ \ H^1(K_v, \Omega) = {\rm Hom}_{\rm cont}(\Gal(K_v^{\rm sep}/K_v), \Omega).
$$

Suppose now that $t \in \ker \theta_{\Omega, V}.$ Then $t$ is represented by a continuous homomorphism
$$
\chi \colon \Gal(K^{\rm sep}/K) \to \Omega
$$
that vanishes on every decomposition group $G(v) = \Gal(K_v^{\rm sep}/K_v)$ for $v \in V$. In particular, $\chi$ vanishes on all of the inertia subgroups and thus factors through a homomorphism $\chi' \colon \Gal(K_V/K) \to \Omega$, where $K_V/K$ is the maximal subextension of $K^{\rm sep}$ that is unramified at all $v \in V$.  Thus, $t$ is in the image of the inflation map
$$
H^1(\Gal(K_V/K), \Omega) \to H^1(K, \Omega).
$$
Since the inflation map on degree 1 cohomology is injective, we may identify $H^1(\Gal(K_V/K), \Omega)$ with its image in $H^1(K, \Omega)$, and thus, by a slight abuse of notation, view $\Sha(\Omega, V)$ as a subset of $H^1(\Gal(K_V/K), \Omega) = {\rm Hom}_{\rm cont}(\Gal(K_V/K), \Omega).$

Now, it is known that $\Gal(K_V/K)$ can be canonically identified with the \'etale fundamental group $\pi_1^{\mathrm{\acute{e}t}}(X, \bar{s})$ for the geometric point $\bar{s} \colon {\rm Spec}~(K^{\rm sep}) \to X$ (see the argument in \cite[\S2]{RR-Tori} for the details). Furthermore, we have the homotopy sequence
$$
1 \to \pi_1^{\mathrm{\acute{e}t}}(\overline{X}, \bar{s}) \to \pi_1^{\mathrm{\acute{e}t}}(X, \bar{s}) \to \Gal(k^{\rm sep}/k) \to 1
$$
where $\overline{X} = X \times_k k^{\rm sep}$ (see, e.g., \cite[Proposition 5.6.1]{Szamuely}), which leads to the exact sequence of abelian groups
$$
0 \to \Hom_{\rm cont}(\Gal(k^{\rm sep}/k), \Omega) \to \Hom_{\rm cont}(\pi_1^{\mathrm{\acute{e}t}}(X, \bar{s}), \Omega) \to \Hom_{\rm cont}(\pi_1^{\mathrm{\acute{e}t}}(\overline{X}, \bar{s}), \Omega).
$$
As above, we view $\Sha(\Omega, V)$ as a subset of $\Hom_{\rm cont}(\pi_1^{\mathrm{\acute{e}t}}(X, \bar{s}), \Omega)$. Since $k$ is a field of characteristic 0, it is known that $\pi_1^{\mathrm{\acute{e}t}}(\overline{X}, \bar{s})$ is a finitely presented profinite group (see the discussion in \cite[\S1]{Esn}). Consequently, $\Hom_{\rm cont}(\pi_1^{\mathrm{\acute{e}t}}(\overline{X}, \bar{s}), \Omega)$ is finite.

To complete the argument, we will show that
the image of the group $\Hom_{\rm cont}(\Gal(k^{\rm sep}/k), \Omega)$ in $\Hom_{\rm cont}(\pi_1^{\mathrm{\acute{e}t}}(X, \bar{s}), \Omega)$ has trivial intersection with $\Sha(\Omega, V).$ For this, we will need the following Bertini-type result that was formulated in
\cite[Proposition 10]{RR-Tori2} (and which immediately follows from \cite[Theorem 3.6]{GK}).

\begin{prop}\label{P:Bertini-type}
Let $X$ be a smooth geometrically integral quasi-projective variety of dimension $\geq 2$ over a field $k$ of characteristic 0. Then $X$ contains a smooth geometrically integral $k$-defined subvariety $Y$ of codimension 1. Moreover, given $x \in X(k)$, one can choose such a $Y$ so that it contains $x$.
\end{prop}

Take $y \in X(k)$. Using Proposition \ref{P:Bertini-type}, we can find a geometrically integral smooth $k$-defined subvariety $Y \subset X$ of codimension 1 such that $y \in Y(k)$. Let $v = v_Y$ be the discrete valuation of $K$ associated with $Y$. Suppose $z \in H^1(k, \Omega) = {\rm Hom}_{{\rm cont}}(\Gal(k^{\rm sep}/k), \Omega)$ is an element whose image in $\Hom_{\rm cont}(\pi_1^{\mathrm{\acute{e}t}}(X, \bar{s}), \Omega)$ lies in $\Sha(\Omega, V).$ We represent $z$ by an element
$$
\tilde{z} \in H^1(\tilde{\ell}/k, \Omega)
$$
for a finite Galois extension $\tilde{\ell}/k$.
Set $\tilde{L} = \tilde{\ell} \cdot K$ and let $\tilde{v}$ be a place of $\tilde{L}$ above $v$. Passing to the completions, we note that $\tilde{L}_{\tilde{v}}/K_{v}$ is an unramified extension with corresponding residue fields $\tilde{L}^{(\tilde{v})} = \tilde{\ell}(Y_{\tilde{\ell}})$ and $K^{(v)} = k(Y)$ (where $Y_{\tilde{\ell}} = Y \times_k \tilde{\ell}$). Consequently, reduction modulo the maximal ideal of the valuation ring $\mathcal{O}_{\tilde{L}_{\tilde{v}}}$ induces a map
$$
\rho_v \colon H^1(\tilde{L}_{\tilde{v}}/K_v, \Omega) \to H^1(\tilde{L}^{(\tilde{v})}/K^{(v)}, \Omega) = H^1(\tilde{\ell}(Y_{\tilde{\ell}})/k(Y), \Omega).
$$
Next, consider the local rings $\mathcal{O}_{Y, y}$ and $\mathcal{O}_{Y_{\tilde{\ell}}, y} = \tilde{\ell} \cdot \mathcal{O}_{Y, y}$ with maximal ideals $\mathfrak{m}$ and $\mathfrak{m}_{\tilde{\ell}}.$ The residue fields $\mathcal{O}_{Y,y}/\mathfrak{m}$ and $\mathcal{O}_{Y_{\tilde{\ell}}, y}/\mathfrak{m}_{\tilde{\ell}}$ coincide with $k$ and $\tilde{\ell}$, respectively, and we again have a map
$$
\rho_{Y_{\tilde{\ell}}, y} \colon H^1(\tilde{\ell}(Y_{\tilde{\ell}})/k(Y), \Omega) \to H^1(\tilde{\ell}/k, \Omega)
$$
induced by reduction modulo $\mathfrak{m}_{\tilde{\ell}}.$ These fit together into a diagram
\[\begin{tikzcd}
	{H^1(\tilde{\ell}/k, \Omega)} & {H^1(\tilde{L}_{\tilde{v}}/K_v, \Omega)} \\
	& {H^1(\tilde{\ell}(Y_{\tilde{\ell}})/k(Y), \Omega)} \\
	& {H^1(\tilde{\ell}/k, \Omega)}
	\arrow["{\theta_v}", from=1-1, to=1-2]
	\arrow["{{\rm id}}"', dashed, from=1-1, to=3-2]
	\arrow["{\rho_v}", from=1-2, to=2-2]
	\arrow["{\rho_{Y_{\tilde{\ell}}, y}}", from=2-2, to=3-2]
\end{tikzcd}\]
such that the composition $\rho_{Y_{\tilde{\ell}}, y} \circ \rho_v \circ \theta_v$ is the identity map. Since $\tilde{z} \in \ker \theta_v$, it follows that $\tilde{z} = 0$, as needed.
\end{proof}

\section{Brief review of adelic groups}\label{S:adeles}

In this section, we quickly recall the essential points concerning adelic groups that will be needed later in the paper.
For further details, the reader can consult \cite[\S3]{RR-Tori} and \cite[\S3]{RR-Tori1}. Although in subsequent sections we will only work with groups of multiplicative type, in this section, we will consider, more generally, possibly non-commutative algebraic groups whose connected component is a torus.

Suppose $K$ is a field equipped with a set $V$ of discrete valuations satisfying the following condition (which holds automatically for the sets of geometric places of function fields):
\vskip2mm
\noindent $(\ast)$ \ \ \  \ \ \ \ for any  $a \in K^{\times}$, the set  $V(a) = \{v \in V \mid v(a) \neq 0 \}$  is finite.
\vskip2mm
\noindent We recall that the ring of adeles $\mathbb{A}(K,V)$ is defined as the restricted product of the completions $K_v$ for $v \in V$ with respect to the valuation rings $\mathcal{O}_v \subset K_v$. In view of condition $(\ast)$, we have a diagonal embedding $K \hookrightarrow \mathbb{A}(K,V)$ that gives $\mathbb{A}(K,V)$ a natural structure of a $K$-algebra. Furthermore, we let
$$
\mathbb{A}^{\infty}(K,V) = \prod_{v \in V} \mathcal{O}_v
$$
be the subring of integral adeles.

Next, let $L/K$ be a finite separable field extension and denote by $V^{L}$ the set of all extensions to $L$ of the discrete valuations in $V$. Then we have an isomorphism of topological rings $$\mathbb{A}(K,V) \otimes_K L \simeq \mathbb{A}(L, V^L).$$ In particular, if $L/K$ is a finite Galois extension, the adele ring $\mathbb{A}(L, V^L)$ has a natural action of the Galois group $\Gal(L/K)$ such that
$$
\mathbb{A}(L, V^L)^{\Gal(L/K)} = \mathbb{A}(K,V).
$$

Suppose now that $G$ is a linear algebraic $K$-group with a fixed $K$-defined matrix realization $G \subset {\rm GL}_n$. For each $v \in V$, we set $G(\mathcal{O}_v) = G(K_v) \cap {\rm GL}_n(\mathcal{O}_v)$, and then define the corresponding adelic group as
$$
G(\mathbb{A}(K,V)) := \left\{ (g_v) \in \prod_{v \in V} G(K_v) \mid g_v \in G(\mathcal{O}_v) \ \text{for almost all} \ v \in V \right\}
$$
(in other words, $G(\mathbb{A}(K,V))$ is the restricted product of the groups $G(K_v)$ with respect to the subgroups $G(\mathcal{O}_v)$)\footnotemark. \footnotetext{We note that condition $(\ast)$ implies that this adelic group does not actually depend on the choice of matrix realization $G \hookrightarrow {\rm GL}_n$; more precisely, a $K$-defined isomorphism between two linear $K$-groups induces an isomorphism between the corresponding adelic groups.}
The product
$$
G(\mathbb{A}^{\infty}(K,V)) := \prod_{v \in V} G(\mathcal{O}_v)
$$
is usually referred to as the \emph{subgroup of integral adeles}. Again, due to the fact that $V$ satisfies $(\ast)$, we have the diagonal embedding $G(K) \hookrightarrow G(\mathbb{A}(K,V))$, whose image is called the group of \emph{principal adeles} and is routinely identified with $G(K)$. For a finite separable extension $L/K$, the groups $G(\mathbb{A}(L, V^L))$ and $G(\mathbb{A}^{\infty}(L, V^L))$ are defined analogously. If $L/K$ is a Galois extension, then the standard action of $\Gal(L/K)$ on $G(L)$ naturally extends to an action on $G(\mathbb{A}(L, V^L))$.

We now specialize to the following situation. Suppose $K = k(X)$ is the function field of a smooth geometrically integral affine variety $X$ defined over a field $k$ of characteristic 0 and let $V$ be the set of geometric places of $K$. Let $D$ be a linear algebraic $k$-group whose connected component $D^0 = T$ is an algebraic $k$-torus. Take $\ell/k$ to be a finite Galois extension that splits $T$, set $L = \ell \cdot K$, and let $V^L$ be the set of all extensions of the valuations in $V$ to $L$. We note that $V^L$ is also a geometric set of places since its elements correspond to the prime divisors on the normalization of $X$ in $L$.

\begin{lemma}\label{L:condition-T}
Keeping the preceding notations, we have the following:

\begin{itemize}
\item[{\rm (i)}] $D(L_v) = D(\mathcal{O}_{L_v}) \cdot T(L_v)$ for almost all $v \in V^L$;

\vskip2mm

\item[{\rm (ii)}] if $k$ is a finitely generated field, then there exists a finite subset $S^L \subset V^L$ such that
$$
D(\mathbb{A}(L, V^L \setminus S^L)) = D(\mathbb{A}^{\infty}(L, V^L \setminus S^L)) \cdot D(L).
$$
\end{itemize}

\end{lemma}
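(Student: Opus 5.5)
Both parts rest on the finite component group $F = D/T$, which is a finite étale $k$-group, together with the fact that $T$ is split over $\ell$, hence over $L$ and all $L_v$.

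For (i), I would first pass to a finite Galois extension $\ell'/\ell$ over which the component group $F$ becomes constant and all its points are rational. Then I would choose an $\mathcal{O}$-model of $D$ over a suitable localization of the coordinate ring, i.e., a smooth affine group scheme $\mathcal{D}$ over $\ell[X']$ for a suitable affine open $X'$ of the normalization of $X$ in $L$, with $\mathcal{D}^0 = \mathcal{T}$ a split torus and $\mathcal{D}/\mathcal{T} = \mathcal{F}$ finite étale. Since $D$, $T$, $F$ are all defined over $k$, I can take them defined over $k$ and base change. For $v$ in $V^L$ whose center lies in $X'$ (almost all $v$), any $g \in D(L_v)$ has image $\bar g \in F(L_v)$. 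If $F$ is constant over $\ell$, then $F(L_v) = F(\mathcal{O}_{L_v}) = F(\ell)$. Each element of $F(\ell)$ lifts to $D(\bar\ell)$, and these finitely many lifts lie in $D(\ell_1)$ for some finite $\ell_1$. If $\ell$ is chosen large enough (allowed, since it only needs to split $T$; alternatively, argue that the fibre of $\mathcal{D}(\mathcal{O}_{L_v}) \to \mathcal{F}(\mathcal{O}_{L_v})$ is a $\mathcal{T}$-torsor over a henselian local ring with residue field where $H^1$ of the split torus vanishes by Hilbert 90), these lifts are $\ell$-points and hence integral at almost all $v$. This gives $g \in D(\mathcal{O}_{L_v})T(L_v)$. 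The cleaner route is the torsor one: the preimage of $\bar g \in \mathcal{F}(\mathcal{O}_{L_v})$ in $\mathcal{D}$ is a $\mathcal{T}$-torsor over $\mathcal{O}_{L_v}$, trivial because $H^1(\mathcal{O}_{L_v}, \mathbb{G}_m^r)=\mathrm{Pic}=0$. So it has an integral point $h$, and $g \in hT(L_v)$. What remains to check is that $\bar g$ is integral, which holds since $\mathcal{F}$ is finite, hence proper, over the base.

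For (ii), using (i) it suffices to handle $T(\mathbb{A}(L,V^L\setminus S^L))$ modulo integral adeles and $D(L)$. Since $T \simeq \mathbb{G}_m^r$ over $L$, I have $T(\mathbb{A})/T(\mathbb{A}^\infty)T(L) \simeq (\mathrm{Div}/\mathrm{Prin})^r$, i.e., $\mathrm{Pic}(X_L')^r$ of the open subscheme $X'_L$ of the normalization obtained by removing $S^L$. The key input is that for $k$ (hence $\ell$) finitely generated of characteristic 0, the Picard group of a normal variety over $\ell$ is finitely generated (Néron--Severi plus Mordell--Weil/Lang--Néron). So I choose $S^L$ to contain the supports of finitely many divisors generating $\mathrm{Pic}$, which kills the class group of the complement. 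I also add the finitely many exceptional places from (i), and the places where $D(L_v)\neq D(\mathcal{O}_{L_v})T(L_v)$.

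Then any adele $g=(g_v)$ is written componentwise as $g_v=d_vt_v$ with $d_v$ integral, $t=(t_v)$ an adele of $T$ (integral where $g_v$ is, by construction of the model), and $t \in T(\mathbb{A}^\infty)T(L)$. Hence $g \in D(\mathbb{A}^\infty)T(\mathbb{A}^\infty)T(L) \subset D(\mathbb{A}^\infty)D(L)$, which is the claim. I expect the main obstacle to be the finite generation of the Picard group (and making sure removing finitely many prime divisors kills it), together with ensuring that the decomposition in (i) yields an adele for $t$.
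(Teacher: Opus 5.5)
Your proposal is correct. For (ii) it is the paper's argument: finite generation of $\mathrm{Pic}$ gives a finite $S^L$ with $T(\mathbb{A}(L,V^L\setminus S^L)) = T(\mathbb{A}^{\infty}(L,V^L\setminus S^L))\cdot T(L)$, and (i) upgrades this to $D$ --- your componentwise factorization $g = d\cdot a\cdot c$ is exactly the paper's observation that the map of double cosets from $T$ to $D$ is surjective. For (i) the paper only cites \cite[Lemma 3.6]{RR-Tori1}, whereas your torsor argument is a valid self-contained proof: the fibre of $D \to D/T$ over $\bar g \in (D/T)(\mathcal{O}_{L_v}) = (D/T)(L_v)$ is a torsor under the split torus $T$ over the local ring $\mathcal{O}_{L_v} \supset \ell$, hence trivial since $\mathrm{Pic}(\mathcal{O}_{L_v}) = 0$. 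Discard your first route, though, because $\ell$ is fixed in the statement and cannot be enlarged.
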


\begin{proof} \ \

\noindent (i) This follows from \cite[Lemma 3.6]{RR-Tori1}.

\vskip2mm

\noindent (ii) The argument at the start of \cite[\S3]{RR-Tori2} (which ultimately boils down to the finite generation of the Picard group ${\rm Pic}(L, V^L)$) shows that there exists a finite subset $S^L \subset V^L$ such that
$$
T(\mathbb{A}(L, V^L \setminus S^L)) = T(\mathbb{A}^{\infty}(L, V^L \setminus S^L)) \cdot T(L).
$$
We may assume that $S^L$ is sufficiently large so that (i) holds for all $v \in V^L \setminus S^L$. Then the natural map
$$
T(\mathbb{A}^{\infty}(L, V^L \setminus S^L))\backslash T(\mathbb{A}(L, V^L \setminus S^L))/ T(L) \to D(\mathbb{A}^{\infty}(L, V^L \setminus S^L))\backslash D(\mathbb{A}(L, V^L \setminus S^L))/ D(L)
$$
is surjective; since the term on the left reduces to a single element, so does the term on the right, which yields our claim.
\end{proof}

Next, the diagonal embedding $D(L) \hookrightarrow D(\mathbb{A}(L, V^L))$ induces a map
$$
\lambda_{D, V, L/K} \colon H^1(L/K, D) \to H^1(L/K, D(\mathbb{A}(L, V^L))).
$$
Furthermore, for each $v \in V$, let us pick one extension $w \in V^L$, and consider the global-to-local map
$$
\theta_{D,V,L/K} \colon H^1(L/K, D) \to \prod_{v \in V} H^1(L_w/K_v, D).
$$
We define the \emph{relative Tate-Shafarevich set} (with respect to the extension $L/K$ and the set $V$) as
$$
\Sha(L/K, D, V) = \ker \theta_{D,V, L/K}.
$$
One can make similar definitions when $V$ is replaced by a subset $V' \subset V$ with finite complement (and $V^L$ by the corresponding set of extensions $(V')^L$).

\begin{lemma}\label{L:Relative-Sha-property}
With these notations, we have the following.
\begin{itemize}

\item[{\rm (i)}] $\Sha(L/K, D, V)$ coincides with the kernel of $\lambda_{D, V, L/K}$.

\vskip2mm

\item[{\rm (ii)}] Set $E(D, V, L) = D(L) \cap D(\mathbb{A}^{\infty}(L, V^L))$. If $D(\mathbb{A}(L, V^L)) = D(\mathbb{A}^{\infty}(L, V^L)) \cdot D(L)$, then $\Sha(L/K, D, V)$ is contained in the image of the map
    $$
    \nu \colon H^1(L/K, E(D, V, L)) \to H^1(L/K, D)
    $$
    induced by the inclusion $E(D, V, L) \hookrightarrow D(L).$

\end{itemize}

\end{lemma}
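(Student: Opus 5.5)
For (i) I will compare both kernels through the product decomposition of the adelic group. The plan is to write $D(\mathbb{A}(L,V^L))$ as a restricted product over $v \in V$ of the groups $D(L\otimes_K K_v) = \prod_{w \mid v} D(L_w)$, on which $\Gal(L/K)$ acts by permuting the factors over each $v$. For each $v$ fix the chosen $w \mid v$ with decomposition group $\Gal(L_w/K_v) \subset \Gal(L/K)$. Shapiro's lemma then identifies $H^1(L/K, \prod_{w\mid v} D(L_w))$ with $H^1(L_w/K_v, D)$. Under this identification, the composite of $\lambda_{D,V,L/K}$ with projection to the $v$-component is the local map $\theta_v$. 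This immediately gives $\ker\lambda_{D,V,L/K} \subset \Sha(L/K,D,V)$.

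For the reverse inclusion the difficulty is the restricted-product condition. A cocycle $x_\sigma$ with values in $D(L)$ that is locally trivial at every $v$ gives, for each $v$, an element $a_v \in \prod_{w\mid v} D(L_w)$ with $x_\sigma = a_v^{-1}\sigma(a_v)$. The collection $(a_v)$ must be an adele, i.e.\ integral for almost all $v$. To arrange this, I choose a finite set $S$ outside of which all the finitely many values $x_\sigma$ lie in $D(\mathcal{O}_{L_w})$ and $L/K$ is unramified. For such $v$ the cocycle takes values in $D(\mathcal{O}_{L\otimes K_v})$. I then want the local class to already be trivial in $H^1(\Gal(L/K), \prod_{w\mid v}D(\mathcal{O}_{L_w}))$, so that $a_v$ can be chosen integral.

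This is the main obstacle: injectivity of $H^1$ of the integral points into $H^1$ of the $L_w$-points for unramified extensions at almost all places. I would handle it by citing the corresponding statement from \cite[\S3]{RR-Tori1}, where the same identification is carried out for groups whose connected component is a torus. If the injectivity is unavailable there, the fallback is to enlarge $S$ using Lemma \ref{L:condition-T}(i) and a Hensel/Lang-type argument on the smooth integral model: for unramified $L_w/K_v$ the relevant $H^1$ of the integral points reduces to the residue field, and there the classes are compared directly. Once the $a_v$ are integral for almost all $v$, $(a_v)$ is an adele trivializing $x$ in $H^1(L/K, D(\mathbb{A}(L,V^L)))$, which proves (i).

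For (ii), take $x \in \Sha(L/K,D,V)$. By (i) there is $a \in D(\mathbb{A}(L,V^L))$ with $x_\sigma = a^{-1}\sigma(a)$ for all $\sigma$. Using the hypothesis, write $a = b\,d$ with $b \in D(\mathbb{A}^{\infty}(L,V^L))$ and $d \in D(L)$. The cohomologous cocycle $y_\sigma = d\,x_\sigma\,\sigma(d)^{-1}$ equals $b^{-1}\sigma(b)$. This element lies in $D(L)$ because $x_\sigma$ and $d$ do, and it lies in $D(\mathbb{A}^{\infty}(L,V^L))$ because $b$ does and the Galois action preserves the integral adeles. Hence $y$ is a cocycle with values in $E(D,V,L)$. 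Since $y$ is cohomologous to $x$ in $H^1(L/K,D)$, the class of $x$ equals $\nu([y])$, which proves (ii).
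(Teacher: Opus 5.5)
Your proposal follows the paper's route. For (i) you make the same reduction as the paper (along the lines of \cite[Proposition 3.1]{RR-Tori1}): it suffices that $H^1(L_w/K_v, D(\mathcal{O}_{L_w})) \to H^1(L_w/K_v, D(L_w))$ has trivial kernel for almost all $v$. Your (ii) is the paper's argument, with the check that $y_\sigma = b^{-1}\sigma(b)$ is integral made explicit.

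For the key step in (i), rely on the citation or on the paper's mechanism, not on your residue-field fallback. The paper uses $T(L_w)\simeq\Delta\times T(\mathcal{O}_{L_w})$ with $\Delta=X_*(T)$, so that $D(L_w)=\Delta\cdot D(\mathcal{O}_{L_w})$ by Lemma~\ref{L:condition-T}(i) and $\Delta\cap D(\mathcal{O}_{L_w})=\{1\}$. Writing a trivializing element as $a=\delta u$ with $\delta\in\Delta$ and $u\in D(\mathcal{O}_{L_w})$ then forces $\delta^{-1}\sigma(\delta)=1$, so $u$ is an integral trivializing element; alternatively, use \cite[Theorem 4.1]{CT-S}. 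Reduction to the residue field only describes $H^1$ of the integral points and does not by itself control the non-integral part of a trivializing element.
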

\begin{proof} \ \

\noindent (i) This is proved along the same lines as \cite[Proposition 3.1]{RR-Tori1}. Namely, as noted in that argument, it suffices to shows that for almost all $v \in V$ and $w \vert v$, the natural map
$$
\mu^1 \colon H^1(L_w/K_v, D(\mathcal{O}_{L_w})) \to H^1(L_w/K_v, D(L_w))
$$
has trivial kernel.
For this, using the fact that the extension $L_w/K_v$ is unramified, one first observes that
$$
T(L_w) \simeq \Delta \times T(\mathcal{O}_{L_w}), \ \ \ \text{where} \ \Delta = X_*(T) \ \text{is the group of co-characters}.
$$
Consequently, by Lemma \ref{L:condition-T}(i), we have
$$
D(L_w) = \Delta \cdot D(\mathcal{O}_{L_w})
$$
for almost all $v \in V$ and $w \vert v$, and, moreover, $\Delta \cap D(\mathcal{O}_{L_w})$ since $\Delta$ does not contain any bounded subgroups. Using these facts, one then checks that $\mu^1$ has trivial kernel by a direct calculation.

\vskip2mm

\noindent (ii) Suppose $x \in H^1(L/K, D)$ lies in  $\ker \lambda_{D, V, L/K}$, and represent $x$ by a cocycle $\{x_{\sigma}\}_{\sigma \in \Gal(L/K)}.$ Then there exists $a \in D(\mathbb{A}(L, V^L))$ such that
$$
x_{\sigma} = a^{-1} \sigma(a) \ \ \ \text{for all} \ \sigma \in \Gal(L/K).
$$
By our assumption, we can write
$$
a = b \cdot c, \ \ \ \text{with} \ b \in D(\mathbb{A}^{\infty}(L, V^L)) \ \text{and} \ c \in D(L).
$$
Then
$$
y_{\sigma} = c x_{\sigma} \sigma(c)^{-1}
$$
defines a cocycle in the same cohomology class as $x$ that clearly takes values in $E(D, V^L, L)$, proving our claim.

\end{proof}

Now, if $k$ is a finitely generated field, then by Lemma \ref{L:condition-T}(ii), we can find a subset $V' \subset V$ with finite complement such that for the corresponding set of extensions $(V')^L$, we have
$$
D(\mathbb{A}(L, (V')^L)) = D(\mathbb{A}^{\infty}(L, (V')^L)) \cdot D(L).
$$
Combining this with Lemma \ref{L:Relative-Sha-property}(ii), we obtain

\begin{cor}\label{C:Relative-Sha-unit-group}
Suppose $k$ is a finitely generated field. Then $\Sha(L/K, D, V')$, and hence also $\Sha(L/K, D, V)$, lies in the image of
$$
\nu' \colon H^1(L/K, E(D, V', L)) \to H^1(L/K, D),
$$
where $E(D, V', L) = D(L) \cap D(\mathbb{A}^{\infty}(L, (V')^L))$.

\end{cor}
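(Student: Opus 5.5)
The plan is to combine Lemma \ref{L:condition-T}(ii) with Lemma \ref{L:Relative-Sha-property}(ii), applied to the cofinite subset $V'$ in place of $V$. Two points need care: the product decomposition must be transported to $V'$, and the inclusion $\Sha(L/K,D,V)\subseteq \Sha(L/K,D,V')$ must be justified.

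First, the decomposition. Lemma \ref{L:condition-T}(ii) gives a finite subset $S^L\subset V^L$ with
$$D(\mathbb{A}(L,V^L\setminus S^L)) = D(\mathbb{A}^{\infty}(L,V^L\setminus S^L))\cdot D(L).$$
Let $S\subset V$ be the finite set of restrictions to $K$ of the places in $S^L$, and set $V'=V\setminus S$. Then $(V')^L = V^L\setminus \widetilde{S}$, where $\widetilde{S}\supseteq S^L$ is the finite set of all extensions of places of $S$. The adelic group over $(V')^L$ is obtained from the one over $V^L\setminus S^L$ by projecting away the finitely many factors indexed by $\widetilde{S}\setminus S^L$. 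This projection is surjective, maps integral adeles onto integral adeles, and is compatible with the diagonal embedding of $D(L)$. Hence the decomposition descends:
$$D(\mathbb{A}(L,(V')^L)) = D(\mathbb{A}^{\infty}(L,(V')^L))\cdot D(L).$$
Since $\widetilde{S}$ is stable under $\Gal(L/K)$, the Galois action on the adelic groups is respected throughout.

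Next, apply Lemma \ref{L:Relative-Sha-property}(ii) with $V'$ in place of $V$. The paper notes that the definitions and Lemma \ref{L:Relative-Sha-property} make sense for cofinite subsets, and $V'$ still satisfies $(\ast)$. This yields
$$\Sha(L/K,D,V')\subseteq \operatorname{im}\nu'.$$

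It remains to show $\Sha(L/K,D,V)\subseteq\Sha(L/K,D,V')$. This is immediate from the definitions: $\theta_{D,V',L/K}$ is the composite of $\theta_{D,V,L/K}$ with the projection of $\prod_{v\in V}$ onto $\prod_{v\in V'}$, so any class in the kernel of the former is in the kernel of the latter.

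The only step needing real care is the first one, namely checking that the factorization over $V^L\setminus S^L$ passes to the smaller, Galois-stable set $(V')^L$. I expect this to be routine once one observes that the adelic groups are restricted products and that projection onto a subset of coordinates preserves all three pieces (full adeles, integral adeles and the diagonal image of $D(L)$). If one prefers to avoid the projection, an alternative is to enlarge $S^L$ at the outset to its Galois closure $\widetilde{S}$ and note that Lemma \ref{L:condition-T}(ii) remains valid after enlarging the finite set, by the same projection argument.
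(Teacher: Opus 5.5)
Your proposal is correct and follows the paper's route: the paper combines Lemma \ref{L:condition-T}(ii), which produces a cofinite $V'\subset V$ with $D(\mathbb{A}(L,(V')^L)) = D(\mathbb{A}^{\infty}(L,(V')^L))\cdot D(L)$, with Lemma \ref{L:Relative-Sha-property}(ii) applied to $V'$, and it leaves the inclusion $\Sha(L/K,D,V)\subseteq\Sha(L/K,D,V')$ implicit. Your projection argument, which passes from $V^L\setminus S^L$ to the Galois-stable set $(V')^L$, is sound and fills in a detail the paper passes over.
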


\section{Finiteness of the relative Tate-Shafarevich group}\label{S:Finiteness-Rel-TS}

We now return to our consideration of groups of multiplicative type. More precisely, we will work with the following setup. Let $K = k(X)$ be the function field of a smooth geometrically integral affine variety $X$ of dimension $\geq 2$ over a finitely generated field $k$ of characteristic 0, and let $V$ be the set of geometric places of $K$. Suppose $D$ is a group of multiplicative type defined over $k$. Let $\ell/k$ be a finite Galois extension that splits $T = D^{\circ}.$
Set $L = \ell \cdot K$ and let $V^L$ be the set of all extensions of the valuations in $V$ to $L$. Our goal in this section is to prove the following.

\begin{prop}\label{P:Finiteness-of-Rel-TS}
The map
$$
\lambda_{D, V, L/K} \colon H^1(L/K, D) \to H^1(L/K, D(\mathbb{A}(L, V^L)))
$$
is proper in the following cases:

\vskip2mm
\begin{enumerate}
        \item $k$ is a finitely generated field and $X(k) \neq \emptyset$;
        \item $k$ is a number field.
    \end{enumerate}

\end{prop}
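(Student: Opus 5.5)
Since $D$ is commutative, $H^1(L/K, D)$ and $H^1(L/K, D(\mathbb{A}(L,V^L)))$ are abelian groups and $\lambda_{D,V,L/K}$ is a homomorphism. Properness therefore reduces to the finiteness of $\ker \lambda_{D,V,L/K}$. By Lemma \ref{L:Relative-Sha-property}(i), this kernel is $\Sha(L/K,D,V)$.

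\textbf{Case (1).} Here $k$ is finitely generated and $X(k)\neq\emptyset$. By Corollary \ref{C:Relative-Sha-unit-group}, $\Sha(L/K,D,V)$ lies in the image of $\nu'\colon H^1(L/K,E)\to H^1(L/K,D)$ with $E=E(D,V',L)$. Since $\Gal(L/K)=\Gal(\ell/k)$ is finite, $H^1(L/K,E)$ would be finite if $E$ were a finitely generated abelian group. It is not: $E\supset D(\ell)$, and for split $T$, $T(\ell)=(\ell^\times)^d$ is not finitely generated.

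\begin{itemize}
\item[(a)] Split off the constants. Write $T_L\simeq \mathbb{G}_m^d$. The points of $T$ in $E$ are $d$-tuples of regular invertible functions on the open subset $X'_L$ of the normalization of $X$ in $L$ obtained by removing the divisors in $S^L$. By Rosenlicht's units theorem, $\mathcal{O}(X'_L)^\times/\ell^\times$ is finitely generated.
\item[(b)] Make the splitting Galois-equivariant. Fix $x\in X(k)$, or more precisely a $k$-point of $X'$, obtained by shrinking $S$ suitably or via Proposition \ref{P:Bertini-type}. Evaluation at $x$ is $\Gal(L/K)$-equivariant and gives a splitting $E\cap T(L)=T(\ell)\times E_1$ with $E_1$ finitely generated. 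The finite quotient $D/T$ adds only a finite group modulo $T$.
\item[(c)] Count. From the resulting exact sequences, $H^1(L/K,E)$ is, up to finite groups, $H^1(\ell/k,T)$. More precisely, $D(\ell)$ is a direct summand of $E$ up to finite index, so $H^1(L/K,E)\simeq H^1(\ell/k,D)\oplus(\text{finite})$.
\item[(d)] Kill the constant part. It remains to show that a class in the image of $H^1(\ell/k,D)\to H^1(L/K,D)$ that lies in $\Sha$ is trivial. Copy the end of the proof of Proposition \ref{P:propfin}: choose, by Proposition \ref{P:Bertini-type}, a smooth geometrically integral $Y\ni y$ with $y\in X(k)$. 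Specialize along $v=v_Y$ to $k(Y)$, then at $y$ to $k$. The composite $H^1(\ell/k,D)\to H^1(L_w/K_v,D)\to H^1(\ell(Y_\ell)/k(Y),D)\to H^1(\ell/k,D)$ is the identity. Hence locally trivial constant classes vanish.
\end{itemize}

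\textbf{Case (2).} Here $k$ is a number field and $X(k)$ may be empty. I would not pass to an extension of $k$, because controlling the kernel of restriction is awkward. Instead, redo (a)--(c) without a global $k$-point. Constants now enter only through an extension $0\to D(\ell)\to E\to E_1$ whose quotient is finitely generated. So $\Sha$ is finite modulo the image of $H^1(\ell/k,D)$, and it suffices to show that constant classes $z$ whose image is locally trivial at all $v\in V$ form a finite set.

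For this, use Lang--Weil: $X(k_{\mathfrak p})\neq\emptyset$ for almost all primes $\mathfrak p$ of $k$. Run the specialization argument of (d) over $k_{\mathfrak p}$, with $X_{k_{\mathfrak p}}$, a Bertini divisor through a $k_{\mathfrak p}$-point, and the place of $K$ below it. This should show that $z$ becomes trivial in $H^1(k_{\mathfrak p},D)$ for almost all $\mathfrak p$, up to a finite set $S_0$ of primes. Then $z$ lies in the kernel of $H^1(k,D)\to\prod_{\mathfrak p\notin S_0}H^1(k_{\mathfrak p},D)$. This kernel is finite by the number-field finiteness of $\Sha$ for multiplicative groups, together with the finiteness of $H^1(k_{\mathfrak p},D)$ at the finitely many omitted places.

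\textbf{Main obstacle.} I expect the main obstacle to be exactly this last local step. The place $v_Y$ must be a place of $K$, whereas $Y$ is defined only over $k_{\mathfrak p}$. One has to check that the valuation of $K_{\mathfrak p}=k_{\mathfrak p}(X)$ restricts to a place in $V$, or at least to one whose completion still detects the class. The first thing I would try is to choose $Y$ defined over $k$, using Proposition \ref{P:Bertini-type}, and then pick $k_{\mathfrak p}$-points on $Y$ by Lang--Weil applied to $Y$ itself. Then $v_Y\in V$, and only the second specialization, at $y\in Y(k_{\mathfrak p})$, is carried out locally.
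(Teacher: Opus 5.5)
Your skeleton matches the paper's. The kernel is $\Sha(L/K,D,V)$. Corollary \ref{C:Relative-Sha-unit-group}, finite generation of $E/D(\ell)$ and the long exact sequence reduce you to the constant classes $\Sha_0(D,V)$. These are killed by specializing along a $k$-defined Bertini divisor $Y$: through a $k$-point in case (1), and in case (2) with $Y(k_{\mathfrak p})\neq\emptyset$ for almost all $\mathfrak p$ followed by Borel--Serre. The fix in your last paragraph is essentially what the paper does.

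\textbf{The gap is in step (d)}, on which both cases rest. You cannot simply copy the end of the proof of Proposition \ref{P:propfin}. There $\Omega$ is a finite constant group, so $\Omega(\tilde L_{\tilde v})=\Omega$ and ``reduction modulo the maximal ideal'' makes sense on all coefficients. For $D$ with a nontrivial torus part, reduction is only defined on $D(\mathcal{O}_{L_w})$, not on $D(L_w)$, and only on $D(\mathcal{O}_{Y_\ell,y})$, not on $D(\ell(Y_\ell))$. So the arrows $H^1(L_w/K_v,D)\to H^1(\ell(Y_\ell)/k(Y),D)\to H^1(\ell/k,D)$ in your composite are not reduction maps, and the composite as written does not make sense. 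If you only know $z_\sigma=a^{-1}\sigma(a)$ with $a\in D(L_w)$, reducing gives nothing unless $a$ can be chosen integral; the same problem recurs at $y$.

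What is missing is the injectivity of $H^1(L_w/K_v,D(\mathcal{O}_{L_w}))\to H^1(L_w/K_v,D(L_w))$ and of $H^1(\ell(Y_\ell)/k(Y),D(\mathcal{O}_{Y_\ell,y}))\to H^1(\ell(Y_\ell)/k(Y),D(\ell(Y_\ell)))$. This is exactly what Lemma \ref{L:injective-map-rational-point} adds to the finite-group argument, via \cite[Theorem 4.1]{CT-S}. There $\mu_v$ is factored through the cohomology of integral points, the second factor is injective, and only then does ``specialization composes to the identity'' give injectivity of $\mu_v$. At the discrete valuation $v$ you could check this by hand, since $D(\mathcal{O}_{L_w})$ is a Galois-equivariant direct factor of $D(L_w)$ when $L_w/K_v$ is unramified. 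At $y$, however, $\mathcal{O}_{Y,y}$ is a regular local ring of dimension $\dim X-1$, which is not a dvr once $\dim X\ge 3$. There you need a purity result of Colliot-Th\'el\`ene--Sansuc type, or a workaround such as cutting $Y$ down by further Bertini divisors to a curve through $y$ so that every step is a dvr step.

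A smaller point: step (b) is superfluous and not justified as stated. Evaluation needs a $k$-point of $X'$, not of $X$, and you cannot ``shrink'' $S$, which must be large for Lemma \ref{L:condition-T}(ii). The decomposition claimed in (c) is also more than you prove. As in your own case (2), and as in the paper, the exact sequence $1\to D(\ell)\to E\to\Gamma\to 1$ with $H^1(L/K,\Gamma)$ finite already shows that the image of $H^1(\ell/k,D)$ in $H^1(L/K,E)$ has finite index, which is all you use.
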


\vskip2mm

It suffices to show that $\ker \lambda_{D, V, L/K}$ (which, according to Lemma \ref{L:Relative-Sha-property}(i), coincides with the relative Tate-Shafarevich group $\Sha(L/K, D, V)$) is finite.

Now, by Corollary \ref{C:Relative-Sha-unit-group}, we can find a subset $V' \subset V$ with finite complement such that $\ker \lambda_{D, V, L/K}$ is contained in the image of
$$
\nu' \colon H^1(L/K, E(D, V', L)) \to H^1(L/K, D),
$$
where $E(D, V', L) = D(L) \cap D(\mathbb{A}^{\infty}(L, (V')^L))$.

\begin{lemma}\label{L:FG}
With notations as above, the quotient $E(D, V', L)/D(\ell)$ is a finitely generated abelian group.
\end{lemma}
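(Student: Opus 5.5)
The plan is to reduce to the torus $T = D^{\circ}$ and use the finite generation of units of a normal variety modulo constants. Throughout, $D$ is commutative, so all quotients below are abelian groups. We also have $D(\ell) \subset E(D, V', L)$, since constants are integral at every geometric place.

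\emph{Step 1: reduce from $D$ to $T$.} Let $F = D/T$, a finite $k$-group. The exact sequence $1 \to T \to D \to F \to 1$ gives an injection $D(L)/T(L) \hookrightarrow F(L)$, and $F(L)$ is finite. Hence $E(D,V',L)/(E(D,V',L) \cap T(L))$ is finite. Since $D(\ell) \cap T(L) = T(\ell)$, the subgroup
$$(E(D,V',L) \cap T(L))\cdot D(\ell)/D(\ell) \;\simeq\; (E(D,V',L)\cap T(L))/T(\ell)$$
has finite index in $E(D,V',L)/D(\ell)$. It therefore suffices to show that $(E(D,V',L)\cap T(L))/T(\ell)$ is finitely generated.

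\emph{Step 2: translate to units.} Since $\ell$ splits $T$, we have $T \simeq \mathbb{G}_m^r$ over $\ell$, and we may choose the matrix realization accordingly; by the footnote in \S\ref{S:adeles}, this choice does not affect the adelic groups. Then $E(D,V',L)\cap T(L) = T(L) \cap T(\mathbb{A}^{\infty}(L,(V')^L))$ is identified with $(U)^r$, where
$$U = \{ a \in L^{\times} \mid w(a) = 0 \ \text{for all } w \in (V')^L \}.$$
Let $X_L$ be the normalization of $X$ in $L$, i.e.\ $X \times_k \ell$, which is normal and affine. Let $Y \subset X_L$ be the open subvariety obtained by removing the finitely many prime divisors corresponding to $V^L \setminus (V')^L$. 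Because $Y$ is normal, a rational function is regular on $Y$ iff it has nonnegative valuation at every prime divisor of $Y$. Hence $U = \mathcal{O}(Y)^{\times}$. Similarly $T(\ell) = (\ell^{\times})^r$.

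\emph{Step 3: the unit theorem (main obstacle).} We must show that $\mathcal{O}(Y)^{\times}/\ell^{\times}$ is finitely generated. Since $X$ is geometrically integral, $\ell$ is algebraically closed in $L = \ell(X)$, so the constants of $Y$ are exactly $\ell^{\times}$. By the classical theorem of Rosenlicht (see also Samuel and Lang), for an integral normal variety $Y$ over a field $\ell$ algebraically closed in $\ell(Y)$, the group $\mathcal{O}(Y)^{\times}/\ell^{\times}$ is finitely generated. This does not even need $k$ finitely generated.

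If I wanted a self-contained argument here, I would embed $Y$ in a normal projective compactification $\overline{Y}$. The divisor map $\mathcal{O}(Y)^{\times} \to \bigoplus \mathbb{Z}\cdot Z$, over the finitely many prime divisors $Z$ of $\overline{Y}\setminus Y$, then has kernel $\mathcal{O}(\overline{Y})^{\times} = \ell^{\times}$ and finitely generated image. This gives $(E(D,V',L)\cap T(L))/T(\ell) \simeq (\mathcal{O}(Y)^{\times}/\ell^{\times})^r$ finitely generated, and combined with Step 1 this proves the lemma.
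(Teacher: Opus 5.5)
Your proof is correct and follows the paper's route. The finite-index reduction from $D$ to $T=D^{\circ}$ is exactly the paper's argument, and for the torus case, which the paper simply cites from \cite[Lemma 6]{RR-Tori2}, you supply the standard proof via Rosenlicht's unit theorem applied to $X\times_k\ell$ with finitely many prime divisors removed. One small correction: the identification $E(T,V',L)\simeq U^r$ is not justified by the footnote, which only concerns the full adelic groups and not the integral adeles. It holds because the splitting isomorphism $T_\ell\simeq\mathbb{G}_m^r$ is defined over $\ell$, whose nonzero elements are units at every $w\in V^L$, so it carries $T(\mathcal{O}_{L_w})$ onto $(\mathcal{O}_{L_w}^{\times})^r$ at every place.
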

\begin{proof}
According to \cite[Lemma 6]{RR-Tori2}, the analogously defined group $E(T,V',L)/T(\ell)$ for $T = D^{\circ}$ is finitely generated. Since $E(D, V', L)/D(\ell)$ contains $E(T,V',L)/T(\ell)$ as a subgroup of finite index, our claim follows.

\end{proof}

We thus have a short exact sequence of modules over $\Gal(\ell/k) = \Gal(L/K)$
$$
1 \to D(\ell) \to E(D, V', L) \to \Gamma \to 1,
$$
where $\Gamma$ is finitely generated as an abelian group. This leads to the following exact sequence of cohomology groups:
$$
H^1(\ell/k, D) \stackrel{\delta}{\longrightarrow} H^1(L/K, E(D, V', L)) \to H^1(L/K, \Gamma).
$$
Since the group $H^1(L/K , \Gamma)$ is finite (see, for example, \cite[Ch. II, Corollary 1.32]{MilneCFT}), it follows that the intersection $\ker(\lambda_{D, V, L/K}) \cap ({\rm Im}(\nu' \circ \delta))$ has finite index in $\ker(\lambda_{D, V, L/K})$.

Let us now define
$$
\Sha_0(D, V) = \ker \left( H^1(\ell/k, D) \to \prod_{v \in V} H^1(L_w/ K_v, D) \right),
$$
where, as in the discussion in \S\ref{S:adeles}, for each $v \in V$, we fix a \emph{single} extension $w \vert v$ in $V^L.$ From our definitions, it is clear that
$$
\ker(\lambda_{D, V, L/K}) \cap ({\rm Im}(\nu' \circ \delta)) = (\nu' \circ \delta)(\Sha_0(D, V)).
$$
Therefore, Proposition \ref{P:Finiteness-of-Rel-TS} will follow from

\begin{prop}\label{P:Finiteness-of-Rel-TS-constant}
The group $\Sha_0(D,V)$ is finite in the following cases:

\vskip2mm
\begin{enumerate}
        \item $k$ is a finitely generated field and $X(k) \neq \emptyset$;
        \item $k$ is a number field.
    \end{enumerate}

\end{prop}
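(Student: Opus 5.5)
The plan is to show that every class in $\Sha_0(D,V)$ dies under a \emph{single}, well-chosen geometric place, as in the last step of the proof of Proposition~\ref{P:propfin}. In case (2) we first reduce to case (1) by a finite base change.

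\emph{Case (1).} Take $y \in X(k)$. By Proposition~\ref{P:Bertini-type}, choose a smooth geometrically integral $k$-defined divisor $Y \subset X$ with $y \in Y(k)$, and let $v = v_Y$ with $w \mid v$ the fixed extension. The extension $L_w/K_v$ is unramified with residue fields $\ell(Y_\ell)$ and $k(Y)$. I will show that the localization map
$$H^1(\ell/k, D) \stackrel{\theta_v}{\longrightarrow} H^1(L_w/K_v, D)$$
is injective, which gives $\Sha_0(D,V)=0$. The argument has three steps.
\begin{enumerate}
\item Let $x=\{x_\sigma\}$ be a cocycle with values in $D(\ell)$. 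Since $D(\ell) \subset D(\mathcal{O}_{L_w})$, the class $\theta_v(x)$ comes from $H^1(L_w/K_v, D(\mathcal{O}_{L_w}))$.
\item I will show that $H^1(L_w/K_v, D(\mathcal{O}_{L_w})) \to H^1(L_w/K_v, D(L_w))$ has trivial kernel, by the computation in Lemma~\ref{L:Relative-Sha-property}(i). The inputs are $T(L_w) \simeq \Delta \times T(\mathcal{O}_{L_w})$ with $\Delta = X_*(T)$, and $D(L_w) = \Delta \cdot D(\mathcal{O}_{L_w})$. Because $D$ is defined over $k$, hence constant along $Y$ with good reduction everywhere, these identities should hold at \emph{every} $v$, not only almost all. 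This removes the need to avoid a finite exceptional set, which matters because Proposition~\ref{P:Bertini-type} gives no control over which $Y$ we get.
\item Reduction modulo the maximal ideal gives
$$\rho_v\colon H^1(L_w/K_v, D(\mathcal{O}_{L_w})) \to H^1(\ell(Y_\ell)/k(Y), D).$$
Evaluation at $y$ through $\mathcal{O}_{Y_\ell,y}$ gives a further map to $H^1(\ell/k,D)$. The cocycle $x$ is constant, so each of these maps sends it to itself, and the composite of the two maps with $H^1(\ell/k,D)\to H^1(L_w/K_v, D(\mathcal{O}_{L_w}))$ is the identity. The only issue is that $\rho_v$ and the evaluation map must be defined on the level of the relevant cocycles. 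This works because the values lie in $D(\ell)$, which sits inside all the integral rings involved.
\end{enumerate}
Together, steps 2 and 3 show that $\theta_v$ is injective.

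\emph{Case (2).} Choose a finite Galois extension $k'/k$ with $X(k') \neq \emptyset$ and $\ell \subset k'$. Set $K' = k'K$, and pass to $V^{K'}$ and $L' = k'K$. Restriction sends $\Sha_0(D,V)$ into the analogous group over $k'$, since every place of $K'$ lies over some $v\in V$. The latter group vanishes by case (1), applied over $k'$ with the trivial splitting extension. Hence $\Sha_0(D,V)$ is contained in
$$\ker\bigl(H^1(k,D)\to H^1(k',D)\bigr) = H^1(k'/k, D(k')).$$
It remains to show this group is finite. It is killed by $[k':k]$. Using $1 \to T \to D \to D/T \to 1$ with $D/T$ finite, it reduces to the finiteness of $H^1(k'/k, T(k'))$ for a torus over a number field.

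For that, I would write $T(k')$ via $T(\mathcal{O}_{k',S})$ for a finite set $S$ of places where $T$ has good reduction away from $S$ and $k'/k$ is unramified away from $S$. The group $T(\mathcal{O}_{k',S})$ is finitely generated, so its $H^1$ is finite. The remaining contribution lives in the quotient $T(k')/T(\mathcal{O}_{k',S})$. This quotient embeds in a direct sum of induced cocharacter lattices over places outside $S$, and adding places to $S$ to kill the class group handles the resulting boundary term.

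I expect this last finiteness, together with the verification in step 2 that the decomposition $D(L_w)=\Delta\cdot D(\mathcal{O}_{L_w})$ holds at the specific $v_Y$, to be the main obstacles. If the direct argument for $H^1(k'/k,T(k'))$ becomes messy, I would instead cite the standard finiteness of $H^1(k'/k,T)$ for number fields. That result follows from Tate–Nakayama: $H^1$ of a finitely generated module, together with local finiteness.
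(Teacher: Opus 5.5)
\textbf{Case (2) has a fatal gap.} Because $\ell\subseteq k'$, \emph{every} class in $H^1(\ell/k,D)$ already restricts to zero in $H^1(k',D)$. So the inclusion $\Sha_0(D,V)\subseteq H^1(k'/k,D(k'))$ is automatic and never uses the local conditions at $V$. The appeal to case (1) over $k'$ with trivial splitting field is vacuous, since $H^1(k'/k',D)=0$.

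The finiteness you then need is false. Take $k=\mathbb{Q}$, $\ell=\mathbb{Q}(i)$ and $D=T=R^1_{\ell/k}\mathbb{G}_m$. Then $H^1(\ell/k,T)\cong \mathbb{Q}^{\times}/N_{\ell/\mathbb{Q}}(\ell^{\times})$, which is an infinite elementary abelian $2$-group: the primes $p\equiv 3 \pmod 4$ are independent modulo norms. Being killed by $[k':k]$ does not help. Your direct argument fails for the same reason: in this example, the lattices over places outside $S$ contribute a $\mathbb{Z}/2$ at every inert place. Over a number field no duality theorem makes $H^1(k'/k,T)$ finite; only kernels of localization over cofinitely many places are finite.

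The missing idea is to convert the single geometric condition at $v_Y$ into infinitely many arithmetic ones.
\begin{enumerate}
\item Take a smooth geometrically integral divisor $Y\subset X$ (Proposition~\ref{P:Bertini-type}).
\item By Lang--Weil and Hensel, $Y(k_u)\neq\emptyset$ for all $u$ in a cofinite set $U$ of finite places.
\item The rational-point injectivity from case (1), applied over $k_u$ to $Y_u\subset X_u$, together with the square comparing $v$ and $v_u$, shows that each $\xi\in\Sha_0(D,V)$ dies in $H^1(\ell_{\bar u}/k_u,D)$ for all $u\in U$.
\item Finiteness then follows from Borel--Serre for $\ker\bigl(H^1(k,D)\to\prod_{u\in U}H^1(k_u,D)\bigr)$.
\end{enumerate}

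\textbf{Case (1) also has a gap, in step 3.} Step 2 is fine: your observation that $D(L_w)=\Delta\cdot D(\mathcal{O}_{L_w})$ holds at \emph{every} $v$ for $k$-defined $D$ is a valid substitute for the paper's use of Colliot-Th\'el\`ene--Sansuc at that point.

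However, ``evaluation at $y$'' is not a map on $H^1(\ell(Y_\ell)/k(Y),D(\ell(Y_\ell)))$. After reduction you only know $x_\sigma=\bar b^{-1}\sigma(\bar b)$ with $\bar b\in D(\ell(Y_\ell))$. This $\bar b$ may have zeros or poles at $y$, so it cannot be evaluated. That the cocycle values lie in $D(\ell)$ is beside the point.

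What you need is that $H^1(\ell/k,D(\mathcal{O}_{Y_\ell,y}))\to H^1(\ell/k,D(\ell(Y_\ell)))$ has trivial kernel, i.e.\ that $\bar b$ can be chosen integral at $y$. The ring $\mathcal{O}_{Y,y}$ is regular local of dimension $\dim X-1$, hence not a DVR once $\dim X\ge 3$, so your step 2 does not supply this. The paper gets it from the Colliot-Th\'el\`ene--Sansuc injectivity theorem for regular local rings.

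An elementary fix is also available. Iterate Proposition~\ref{P:Bertini-type} to obtain a flag $Y\supset Y_2\supset\cdots\supset C\ni y$ of smooth geometrically integral subvarieties ending in a curve. Then apply your step-2 computation at each DVR in the chain, ending with $\mathcal{O}_{C_\ell,y}$, where evaluation at $y$ is legitimate.
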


\vskip2mm

\noindent {\bf Case 1: $X(k) \neq 0$.} For the argument, we will need the following statement.

\begin{lemma}\label{L:injective-map-rational-point}
Let $X$ be a smooth geometrically integral variety of dimension $\geq 2$ over a field $k$ with function field $K = k(X)$. Suppose $Y \subset X$ is a smooth geometrically integral $k$-defined subvariety of codimension 1, and let $v$ be the discrete valuation of $K$ associated with $Y$. Let $D$ be a $k$-defined group of multiplicative type and $\ell/k$ be a finite Galois extension.
If $Y(k) \neq \emptyset$, then the map
$$
\mu_v \colon H^1(\ell/k, D) \to H^1(L_w/K_v, D), \ \ \ \text{where} \ L = \ell \cdot K \ \text{and} \ w \vert v,
$$
is injective.
\end{lemma}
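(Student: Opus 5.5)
We will mimic the argument at the end of the proof of Proposition \ref{P:propfin}, constructing a left inverse to $\mu_v$ by a two-step specialization. Fix $y \in Y(k)$ and a place $w$ of $L = \ell \cdot K$ over $v$. Since $Y$ is geometrically integral, $Y_\ell = Y \times_k \ell$ is integral. Hence $v$ has a unique extension $w$ to $L$, the extension $L_w/K_v$ is unramified with $\Gal(L_w/K_v) = \Gal(L/K) = \Gal(\ell/k)$, and the residue fields are $L^{(w)} = \ell(Y_\ell)$ and $K^{(v)} = k(Y)$.

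\emph{First specialization.} Since $D$ is defined over $k$, the valuation ring $\mathcal{O}_{L_w}$ contains $\ell$, so $D(\ell) \subset D(\mathcal{O}_{L_w})$. Reduction modulo the maximal ideal gives a $\Gal(\ell/k)$-equivariant homomorphism $D(\mathcal{O}_{L_w}) \to D(\ell(Y_\ell))$. We would like a map $\rho_w \colon H^1(L_w/K_v, D) \to H^1(\ell(Y_\ell)/k(Y), D)$, but reduction is only defined on integral points. So we first show that
\[
H^1(L_w/K_v, D(\mathcal{O}_{L_w})) \to H^1(L_w/K_v, D(L_w))
\]
is injective, and not merely of trivial kernel. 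Since $D$ is commutative, the kernel is the image of $H^0(\Gal, D(L_w)/D(\mathcal{O}_{L_w}))$ under the connecting map. Lemma \ref{L:condition-T}(i) and the splitting $T(L_w) \simeq \Delta \times T(\mathcal{O}_{L_w})$ from the proof of Lemma \ref{L:Relative-Sha-property}(i) identify the quotient with the lattice $\Delta = X_*(T)$ modulo finite-order contributions. We argue as in that proof. Since $L_w = \ell \cdot K_v$ and $\ell$ splits $T$, the decomposition $T(L_w) = \Delta \times T(\mathcal{O}_{L_w})$ is $\Gal$-equivariant, which should make the argument a direct calculation.

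There is a cleaner route that may avoid this. Every element of $H^1(\ell/k, D)$ is represented by a cocycle with values in $D(\ell) \subset D(\mathcal{O}_{L_w})$. For injectivity of $\mu_v$ it then suffices to show: if a $D(\ell)$-valued cocycle $x$ satisfies $x_\sigma = a^{-1}\sigma(a)$ with $a \in D(L_w)$, then $x$ is a coboundary in $D(\ell)$. Using the decomposition, write $a = \delta \cdot b$ with $\delta \in \Delta$ and $b \in D(\mathcal{O}_{L_w})$. In the torus case, $\Delta \cap D(\mathcal{O}_{L_w})$ is trivial, and comparing components shows $\delta^{-1}\sigma(\delta) = 1$. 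Hence $x_\sigma = b^{-1}\sigma(b)$ with $b$ integral. I expect this step, handling the non-connected part of $D$ when splitting $a$, to be the main obstacle. The fallback is to invoke the trivial-kernel argument of Lemma \ref{L:Relative-Sha-property}(i), which by commutativity of $D$ gives injectivity on the relevant classes.

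\emph{Second specialization.} Once $x$ is cohomologous to $b^{-1}\sigma(b)$ with $b$ integral, we reduce modulo $w$. This shows that the image of $x$ in $H^1(\ell(Y_\ell)/k(Y), D)$ is trivial, the image being represented by $\bar b^{-1}\sigma(\bar b)$ with $\bar b \in D(\ell(Y_\ell))$. To return to $\ell$ we must be careful, since $\bar b$ need not be regular at $y$. One option is to pass to the completion, or henselization, of $\mathcal{O}_{Y_\ell, y}$ and use that $Y$ is smooth at $y$. Then $D(\ell(Y_\ell))$ modulo units at $y$ is again a lattice, given by order of vanishing along a smooth divisor of $Y$ through $y$. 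This requires $\dim Y \geq 1$, which is where $\dim X \geq 2$ enters. Iterating the same decomposition argument, we obtain that $x$ is a coboundary of an element of $D(\mathcal{O}_{Y_\ell, y})$. Reducing modulo $\mathfrak{m}_\ell$ as in the diagram in the proof of Proposition \ref{P:propfin}, this element becomes an element of $D(\ell)$. Since $x$ has constant values, it is fixed by both reductions, so $x$ is a coboundary in $D(\ell)$. This proves that $\mu_v$ is injective.
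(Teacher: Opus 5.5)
Your first specialization is essentially sound, but the second one has a genuine gap. After reducing modulo $w$ you know $x_\sigma=\bar b^{-1}\sigma(\bar b)$ with $\bar b\in D(\ell(Y_\ell))$, and you need $x$ to be the coboundary of an element of $D(\mathcal{O}_{Y_\ell,y})$. You justify this by saying that $D(\ell(Y_\ell))$ modulo units at $y$ (or at the completion or henselization) is a lattice, read off from the order of vanishing along one smooth divisor through $y$. That is false as soon as $d=\dim Y\geq 2$, i.e.\ $\dim X\geq 3$. The ring $A=\mathcal{O}_{Y_\ell,y}$, and likewise $\hat A\simeq \ell[[t_1,\dots,t_d]]$, is a regular local ring. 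Hence it is a UFD with infinitely many height-one primes, so already for $D=\mathbb{G}_m$ the quotient $\mathrm{Frac}(A)^\times/A^\times$ is free abelian of infinite rank. Concretely, $\bar b$ may have zeros and poles along many divisors through $y$, and a single valuation does not see them. So there is no decomposition $D(\mathrm{Frac}\,A)=\Delta\cdot D(A)$ with $\Delta$ a lattice, and ``iterating the decomposition argument'' does not produce an element of $D(A)$. What you need at this point is exactly the injectivity of $H^1(\ell(Y_\ell)/k(Y),D(\mathcal{O}_{Y_\ell,y}))\to H^1(\ell(Y_\ell)/k(Y),D(\ell(Y_\ell)))$ for a regular local ring of arbitrary dimension. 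The paper takes this, and also the DVR step at the fixed place $v$, from \cite[Theorem 4.1]{CT-S}; the retraction $\rho_{Y_\ell,y}$ then finishes the proof as you describe.

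If you prefer to stay elementary, avoid the local ring at $y$ altogether. By repeated use of Proposition~\ref{P:Bertini-type} (with the point $y$), choose a flag $Y=Y_0\supset Y_1\supset\cdots\supset Y_d=\{y\}$ of smooth geometrically integral $k$-subvarieties, each of codimension one in the previous one; on the last curve $Y_{d-1}$, just use the DVR $\mathcal{O}_{Y_{d-1},y}$. At each stage the local ring of $Y_{i,\ell}$ along $Y_{i+1,\ell}$ is a DVR with a $\Gal(\ell/k)$-invariant uniformizer coming from $k(Y_i)$. Your ``compare components'' trick then turns $x_\sigma=b_i^{-1}\sigma(b_i)$ with $b_i\in D(\ell(Y_{i,\ell}))$ into the same identity with $b_i$ integral, which you specialize into $D(\ell(Y_{i+1,\ell}))$; after $d$ steps you land in $D(\ell)$.

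For this, and already in your first step, you should not invoke Lemma~\ref{L:condition-T}(i) or Lemma~\ref{L:Relative-Sha-property}(i). Those are stated only for almost all places, while $v=v_Y$ is a fixed place, and the lemma does not assume that $\ell$ splits $T$. Instead, enlarge $\ell$ so that it splits $D$; this is harmless, since inflation is injective and compatible with $\mu_v$. Then, for $\mathcal{O}_{L_w}$ and for each DVR $\mathcal{O}_F$ in the flag, with invariant uniformizer $\pi$, you have the Galois-equivariant decomposition $D(F)=\Hom(X^*(D),F^\times)=\Hom(X^*(D),\pi^{\Z})\times D(\mathcal{O}_F)$. That decomposition is all the trick requires.
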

\begin{proof}
The proof closely resembles the argument given at the end of the proof of Proposition \ref{P:propfin}. We sketch the main points for the sake of completeness. We begin with the following diagram
\[\begin{tikzcd}
	{H^1(\ell/k, D)} & {H^1(L_w/K_v, D(\mathcal{O}_{L_w}))} & {H^1(L_w/K_v, D(L_w))} \\
	& {H^1(L^{(w)}/K^{(v)}, D)}
	\arrow["{\mu_v'}", from=1-1, to=1-2]
	\arrow["{\nu_v}"', dashed, from=1-1, to=2-2]
	\arrow["{\mu_v''}", from=1-2, to=1-3]
	\arrow["{\rho_v}", from=1-2, to=2-2]
\end{tikzcd}\]
where $\mu_v = \mu_v'' \circ \mu_v'$ is the natural factorization induced by the inclusions $D(\ell) \hookrightarrow D(\mathcal{O}_{L_w}) \hookrightarrow D(L_w)$,  $L^{(w)}$ and $K^{(v)}$ denote the corresponding residue fields, and $\rho_v$ is induced by reduction modulo the maximal ideal of the valuation ring $\mathcal{O}_{{L}_w}$.

The map $\mu_v''$ is injective by \cite[Theorem 4.1]{CT-S}, so it follows that $\ker \mu_v$ is contained in the kernel of $\nu_v = \rho_v \circ \mu_v'.$ Since $K^{(v)} = k(Y)$ and $L^{(w)} = \ell(Y_{\ell})$, where $Y_{\ell} = Y \times_k \ell$, we see that to conclude the proof, it suffices to establish the injectivity of the natural map
$$
\nu_Y \colon H^1(\ell/k, D) \to H^1(\ell(Y_{\ell})/k(Y), D).
$$
Fix $y \in Y(k) \subset Y_{\ell}(\ell)$ and
consider the local rings $\mathcal{O}_{Y, y}$ and $\mathcal{O}_{Y_{{\ell}}, y} = {\ell} \cdot \mathcal{O}_{Y, y}$ with maximal ideals $\mathfrak{m}$ and $\mathfrak{m}_{{\ell}}.$ The residue fields $\mathcal{O}_{Y,y}/\mathfrak{m}$ and $\mathcal{O}_{Y_{{\ell}}, y}/\mathfrak{m}_{{\ell}}$ coincide with $k$ and ${\ell}$. We have a similar diagram to the one above
\[\begin{tikzcd}
	{H^1(\ell/k, D)} & {H^1(\ell(Y_{\ell})/k(Y), D(\mathcal{O}_{Y_{\ell}, y}))} & {H^1(\ell(Y_{\ell})/k(Y), D(\ell(Y_{\ell})))} \\
	& {H^1(\ell/k, D)}
	\arrow["{\nu_{Y}'}", from=1-1, to=1-2]
	\arrow["{{\rm id}}"', dashed, from=1-1, to=2-2]
	\arrow["{\nu_Y''}", from=1-2, to=1-3]
	\arrow["{\rho_{Y_{\ell},y}}", from=1-2, to=2-2]
\end{tikzcd}\]
where again the top row is a factorization of $\nu_Y$ and $\rho_{Y_{\ell}, y}$ is induced by reduction modulo $\mathfrak{m}_{\ell}$. Since $Y$ is smooth, the map $\nu_Y''$ is injective by \cite[Theorem 4.1]{CT-S}. On the other hand, the composition $\rho_{Y_{\ell}, y} \circ \nu_Y'$ is the identity map, so $\nu_Y'$ is injective, and hence $\nu_Y$ is injective, as needed.

\end{proof}

Suppose now that $x \in X(k)$. Using Proposition \ref{P:Bertini-type}, we find a geometrically integral smooth $k$-defined subvariety $Y \subset X$ of codimension 1 such that $x \in Y(k)$. Taking $v$ to be the discrete valuation of $K = k(X)$ associated with $Y$, we conclude from Lemma \ref{L:injective-map-rational-point} that the map
$$
\mu_v \colon H^1(\ell/k, D) \to H^1(L_w/K_v, D)
$$
is injective, and hence $\Sha_0(D,V)$ is trivial.

\vskip2mm

\noindent {\bf Case 2: $k$ is a number field.} By Proposition \ref{P:Bertini-type}, we can find a geometrically integral smooth $k$-defined subvariety $Y \subset X$ of codimension 1. Let $v$ be the discrete valuation of $K = k(X)$ associated with $Y$. As a consequence of the Lang-Weil estimates (cf. \cite{LW}) and Hensel's lemma, there exists a subset $U \subset V^k_f$ with finite complement $V^k_f \setminus U$ (where $V^k_f$ is the set of finite places of $k$) such that $Y(k_u) \neq \emptyset$ for all $u \in U.$

For each $u \in U$, let $\bar{u} \in V^{\ell}_f$ be an extension to $\ell$, and define
$$
\Sha_{\ell/k}(D, U) := \ker \left( H^1(\ell/k, D) \to \prod_{u \in U} H^1(\ell_{\bar{u}}/k_u, D) \right).
$$
We will show that $\Sha_0(D,V) \subset \Sha_{\ell/k}(D, U)$. This will complete the argument since, on the one hand, $\Sha_{\ell/k}(D,U)$ is a subgroup of
$$
\Sha_k(D, U) := \ker \left( H^1(k,D) \to \prod_{u \in U} H^1(k_u, D) \right)
$$
by the injectivity of the inflation map in degree 1, and, on the other hand, $\Sha_k(D,U)$ is finite by a theorem of Borel and Serre (see \cite[Ch. III, \S4.6, Theorem 7]{Serre-GC}).

To establish the inclusion $\Sha_0(D,V) \subset \Sha_{\ell/k}(D, U)$, we take $u \in U$, set
$$
X_u = X \times_k k_u \ \ \ \text{and} \ \ \ Y_u = Y \times_k k_u,
$$
and let $v_u$ be the discrete valuation of $K^u := k_u(X_u)$ associated with $Y_u$. We then have the commutative diagram
\[\begin{tikzcd}
	{H^1(\ell/k, D)} & {H^1(L_w/K_v, D)} \\
	{H^1(\ell_{\bar{u}}/k_v, D)} & {H^1((L^u)_{w_{u}}/(K^u)_{v_u}, D)}
	\arrow["{\mu_v}", from=1-1, to=1-2]
	\arrow["{\theta_u}"', from=1-1, to=2-1]
	\arrow["{\Theta_u}", from=1-2, to=2-2]
	\arrow["{\mu_{v_u}}", from=2-1, to=2-2]
\end{tikzcd}\]
where $L^u = \ell \cdot K^u$ and $w_u \vert v_u.$ Suppose $\xi \in \Sha_0(D,V).$ Then
$$
\mu_{v_u}(\theta_u(\xi)) = \Theta_u(\mu_v(\xi)) = 1.
$$
Since $Y(k_u) \neq \emptyset$, the map $\mu_{v_u}$ is injective by Lemma \ref{L:injective-map-rational-point}, and hence $\xi \in \ker(\theta_u)$. Since $u \in U$ was arbitrary, this yields the inclusion $\Sha_0(D,V) \subset \Sha_{\ell/k}(D, U)$ and concludes the proof.

\section{Proof of Theorem \ref{T:mainthm}}\label{S:proof-of-main-theorem}

In this section, we will apply the results of the previous sections to prove Theorem \ref{T:mainthm}. Recall that $K = k(X)$ is the function field of a smooth geometrically integral variety $X$ of dimension $\geq 2$ over a field $k$ of characteristic 0, $V$ is the set of geometric places of $K$, and $D$ is a $k$-defined group of multiplicative type. We would like to show that $\Sha(D,V)$ is finite in the two cases listed in Theorem \ref{T:mainthm}.

Let $T = D^{\circ}$ (a $k$-torus) and $\Omega = D/T$ (a finite $k$-group). As before, let $\ell/k$ be a finite Galois extension such that $T$ splits over $\ell$.
Set $L = \ell \cdot K$ and let $V^L$ be the set of all extensions of the valuations in $V$ to $L$.

We first note that the inflation-restriction sequence yields an exact sequence
$$
0 \to \Sha(L/K, D,V) \to \Sha(K, D) \to \Sha(L, D),
$$
where
$$
\Sha(L,D) = \ker \left(H^1(L,D) \to \prod_{w \in V^L} H^1(L_w, D) \right).
$$
According to Proposition \ref{P:Finiteness-of-Rel-TS}, the group $\Sha(L/K, D, V)$ is finite in cases (1) and (2). To show that $\Sha(L,D)$ is finite, we consider the exact sequence
$$
1 \to T \to D \to \Omega \to 1.
$$
Since $H^1(L, T) = 0$ by Hilbert's Theorem 90, and we have the commutative diagram
\[\begin{tikzcd}
	{H^1(L,D)} & {H^1(L, \Omega)} \\
	{\displaystyle{\prod_{w \in V^L}}H^1(L_w, D)} & {\displaystyle{\prod_{w \in V^L}}H^1(L_w, \Omega)}
	\arrow[from=1-1, to=1-2]
	\arrow["{\theta_{D,V^L}}"', from=1-1, to=2-1]
	\arrow["{\theta_{\Omega, V^L}}", from=1-2, to=2-2]
	\arrow[from=2-1, to=2-2]
\end{tikzcd}\]
arising from the above sequence, we obtain an inclusion
$$
\Sha(L,D) \hookrightarrow \Sha(L, \Omega).
$$
By Proposition \ref{P:propfin}, the group $\Sha(L, \Omega)$ is finite, yielding the finiteness of $\Sha(L,D)$, and hence also the finiteness of $\Sha(K, D)$, as needed.

\vskip4mm

\noindent {\bf Acknowledgements.} The authors were partially supported by NSF grant DMS-2154408 during the preparation of this paper.

\end{document}